\newtheorem{theorem}{Theorem}
\newtheorem{lemma}[theorem]{Lemma}
\def\Qnum{{\mathbb{Q}}} % a field F
\def\Znum{{\mathbb{Z}}} % a field F
\newcommand{\br}[1]{\langle {#1} \rangle }
\def\cO{{\mathcal O}}
\def\ord{\mbox{\rm{ord}}}
\title{On Difference Sets with small $\lambda$}
\dedicatory{Dedicated to K.T. Arasu on the occasion of his 65th birthday.}
\author{Daniel M. Gordon}
\address{IDA Center for Communications Research\\
4320 Westerra Court\\
San Diego, CA 92121\\
USA}
\email{gordon@ccrwest.org}
\begin{document}

\maketitle

\begin{abstract}
In a 1989 paper \cite{arasu2}, Arasu used an observation about multipliers
to show that no $(352,27,2)$ difference set exists
in any abelian group.  The proof is quite short and required no computer
assistance.  We show that it may be applied to a wide range of
parameters $(v,k,\lambda)$, particularly for small values of $\lambda$.
With it a computer search was able to show that the Prime Power
Conjecture is true up to order $2 \cdot 10^{10}$, extend Hughes and
Dickey's computations for $\lambda=2$ and $k \leq 5000$ up to
$10^{10}$, and show nonexistence for many other parameters.
\end{abstract}

\section{Introduction}

A $(v,k,\lambda)$-difference set $D$ in a group $G$ of order $v$ is a set
$\{d_1,d_2,\ldots,d_k\}$ of elements from $G$ such that every nonzero
element of $G$ has exactly $\lambda$ representations as $d_i - d_j$.
The {\em order} of $D$ is $n=k-\lambda$.

%% A {\em multiplier} is an integer $m$ for which 
%% $D' = \{ md_1,md_2,\ldots,md_k \}$ is a shift $D$ by some $g\in G$.  The
%% set of multipliers form a group $M$, and it is well-known that some
%% translate of $D$ is fixed by $M$.
%% This implies that $D$ must be a union of orbits of $G$ under $M$.

A {\em (numerical) multiplier} is an integer $m$ for which 
%$$mD = \{ md_1,md_2,\ldots,md_k \} = D+g$$
multiplication of each $d_i$ by $m$ produces a shift of the original
difference set: $mD = D+g$ 
for some $g\in G$.  The
set of multipliers form a group $M$, and it is well-known that some
translate of $D$ is fixed by $M$.
This implies that a shift of $D$ can be written as a union of orbits of $G$ under $M$.

The First Multiplier Theorem states that any prime $p>\lambda$ which
divides $n$ and not $v$ must be a multiplier of $D$.  The Multiplier
Conjecture is that the $p>\lambda$ condition is not needed.  This is
still open, but there have been many strengthenings of the First
Multiplier Theorem; see \cite{gs} for recent results.

Many difference set parameters can be dealt with by finding a group of
multipliers $M$ and looking at the resulting
orbits.  For instance, it may be that no union of orbits has
size $k$, or the set of orbits may be small enough that all
possibilities may be checked with a short search.
Lander, in \cite{lander}, gives many such examples.

%For some cases such a search is impractical.
Arasu \cite{arasu2} 
showed that no abelian biplanes (difference sets with $\lambda=2$)
of order 25 exist.  Our main tool will be a generalization of his
argument, which we restate here.

\begin{theorem}\label{thm352}
  No $(352,27,2)$ difference set exists in any abelian group $G$.
\end{theorem}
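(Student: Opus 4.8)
The plan is to exploit the multiplier forced by the order $n=k-\lambda=25=5^2$. Since $5>\lambda=2$, $5\mid n$ and $5\nmid v=2^5\cdot 11$, the First Multiplier Theorem gives that $5$ is a multiplier of $D$. Replacing $D$ by the translate fixed by the multiplier group, I may assume $5D=D$, so that $D$ is a union of orbits of the automorphism $m_5\colon x\mapsto 5x$ of $G$. Writing $G=P\times Q$ with $P$ the Sylow $2$-subgroup ($|P|=32$) and $Q$ the Sylow $11$-subgroup ($|Q|=11$), I would first extract convenient prime-order sub-multipliers: since $5^8\equiv 1\pmod{32}$ and $5^8\equiv 4\pmod{11}$, the power $m_5^{8}$ has order $5$, fixes $P$ pointwise, and acts freely on $Q\setminus\{0\}$, so counting $m_5^8$-orbits inside $D$ gives $|D\cap P|\equiv k\equiv 2\pmod 5$. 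The elementary bound $|D\cap P|(|D\cap P|-1)\le\lambda(|P|-1)=62$, coming from the fact that each nonzero element of $P$ has at most $\lambda$ representations as a difference, then forces $|D\cap P|\in\{2,7\}$.

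Next I would run the standard projection/character argument. For a subgroup $H\le G$ the coset counts $a_g=|D\cap gH|$ satisfy $\overline D\,\overline D^{(-1)}=n+\lambda|H|\,\overline G$ in $\Znum[\overline G]$, where $\overline G=G/H$, so every nontrivial character satisfies $|\chi(\overline D)|=\sqrt n=5$. Taking $H=P$, so $\overline G\cong\Znum_{11}$, the multiplier invariance $a_{5i}=a_i$ makes the counts constant on the three orbits $\{0\}$, $R$ (the quadratic residues) and $N$ of $m_5$ on $\Znum_{11}$; writing these common values $a,b,c$ and using the Gaussian periods of $\Znum_{11}$ reduces the character condition to the Diophantine system
\begin{equation*}
a+5b+5c=27,\qquad (2a-b-c)^2+11(b-c)^2=100,
\end{equation*}
whose only nonnegative solutions are $(a,b,c)=(7,2,2),(2,4,1),(2,1,4)$; note $a=|D\cap P|\in\{2,7\}$, consistent with the previous paragraph. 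Independently, evaluating order-$2$ characters of $G$ gives $|D\cap\widetilde K|\in\{11,16\}$ for every index-$2$ subgroup $\widetilde K$, a constraint I would feed into a double count over all $2^{r}-1$ such subgroups, where $r$ is the $2$-rank of $G$.

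The main obstacle is eliminating the three surviving coset patterns, which cannot be done from the $\Znum_{11}$-projection alone and requires descending into the Sylow $2$-subgroup. There characters of $2$-power order $2^{j}\ge 4$ take values in $\Znum[\zeta_{2^j}]$ with $|\chi(\overline D)|=5$, and the difficulty is that $5$ is self-conjugate neither modulo $11$ (its powers miss $-1$) nor modulo any $2^{j}$ (they stay $\equiv 1\bmod 4$), so a single prime-ideal argument does not pin down $\chi(\overline D)$. I would therefore combine the projections: use the index-$2$ double count together with the congruence $|D\cap P|\equiv 2\pmod 5$ to constrain how $D$ is distributed among the cosets of $P$ and of the subgroups of $P$, then test the finitely many resulting configurations against the group-ring identity $\overline D\,\overline D^{(-1)}=25+22\,\overline G$ on $G/Q\cong P$. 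This is where the argument splits into the seven abelian $2$-group structures for $P$ -- in particular the cyclic case $P=\Znum_{32}$, where only one index-$2$ subgroup is available and the higher-order cyclotomic characters must carry the contradiction -- and I expect the genuine work, and the final contradiction, to live in this casework.
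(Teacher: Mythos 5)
Your first paragraph reproduces the opening of the paper's own argument correctly: $5$ is a multiplier, a translate of $D$ is fixed by it, $m_5^8$ fixes $P$ pointwise and acts with orbits of size $5$ on $Q\setminus\{0\}$, and the two constraints $|D\cap P|\equiv 2\pmod 5$ and $|D\cap P|(|D\cap P|-1)\le 62$ force $|D\cap P|\in\{2,7\}$, hence at least four $5$-orbits inside $D$. But at that point you abandon the orbit structure and work only with the coset counts $|D\cap(g+P)|$, and this discards exactly the information that finishes the proof. The point you are missing is that each $5$-orbit has a \emph{constant} $P$-component: it is a set $\{(h,4^iq):0\le i\le 4\}$ with $h$ fixed. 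Hence all $20$ internal differences of a single $5$-orbit lie in $\{0\}\times(Q\setminus\{0\})$, and since $\{1,3,4,5,9\}$ is itself an $(11,5,2)$ difference set in $\Znum_{11}$, each of the $10$ nonzero elements of $\{0\}\times Q$ occurs \emph{exactly twice} among them. With at least two $5$-orbits (you have at least four), every such element occurs at least four times as a difference, exceeding $\lambda=2$. That is the entire proof; no characters, no self-conjugacy considerations, and no case analysis over the seven abelian groups of order $32$ are needed.

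As written, your proposal is not a proof: it ends by conceding that ``the genuine work, and the final contradiction'' remain to be found in a casework over the structure of $P$ that you never carry out. Your intermediate character computation on $G/P$ is correct (the Gaussian-period equation $(2a-b-c)^2+11(b-c)^2=100$ and its three solutions $(7,2,2)$, $(2,4,1)$, $(2,1,4)$ check out), but your claim that the surviving patterns ``cannot be eliminated from the $\Znum_{11}$-projection alone'' is true only for bare coset counts; the orbit decomposition you already established in the first paragraph is strictly stronger than coset counts, and it kills all three patterns at once by the within-orbit difference count above. The route you propose instead ($2$-power characters, index-$2$ double counts, seven cases for $P$) is both unnecessary and, since it is left unexecuted, a genuine gap.
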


\begin{proof}
Any such difference set has $5$ as a multiplier.  Take $p=11$, and $H$
a group of order 32 so that $G = \Znum_{11} \times H$.
Then
$5^{8} \equiv 1 \pmod {32}$, and so fixes $H$.
The orbits of $\br{5^{8}}$ in $\Znum_{11}$ are $\{0\}$,
$\{1,3,4,5,9\}$, and $\{2,6,7,8,10\}$.  The orbits in $G$ are just
these orbits with a fixed element $h \in H$.

A difference set $D$ made up of these orbits will have a certain
number $a$ of 5-orbits $\br{(1,h)}$ and $\br{(2,h)}$, and 
$b=27-5a$ 1-orbits.  There are $b (b-1)$ differences of the
singleton orbits, each of which is of the form $(0,h)$ with $h \neq 0$.  There are
$31$ such elements, and each must occur exactly twice as a
difference of elements of $D$, and so $b (b-1) \leq 31 \cdot 2 =
62$.

This means that we must have $b<9$, and so $a \geq 4$.  But the
20 differences from elements in one 
5-orbit are all of the form $(x,0)$, $x \neq 0$.  There are 10
such elements, and in fact each of them occurs exactly twice in the
differences of one 5-orbit.  Since we have multiple 5-orbits, these
elements will occur as differences too many times.
\end{proof}

One nice feature of this argument is that it takes care of all abelian
groups $G$ of order 352
at once.  Other arguments (\cite{adjp}, \cite{lander}) only handle
specific groups.

\section{Extending the Method}

It is clear that Arasu's method can be applied to other parameter
sets.  In this section we give a generalization of Theorem~\ref{thm352}.

  \begin{lemma}
  Let $G = \Znum_p \times H$, where $H$ is abelian and $\gcd(p,|H|)=1$.  Let $m$ be a
  multiplier of a $(v,k,\lambda)$ difference set, and $s$ be the
  smallest positive integer for which $m^s \equiv 1
  \pmod{\exp(H)}$.
Then the orbits of $G$ under $\br{m^s}$ are of the form $(\cO,h)$,
for fixed $h \in H$.
There are exactly $|H|$ orbits $(0,h)$ of size 1, and the remaining orbits all have
the same size $o = \ord_p(m^s)$.
  \end{lemma}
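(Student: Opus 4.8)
The plan is to understand the orbit structure of $\br{m^s}$ acting on $G = \Znum_p \times H$ by decomposing the action coordinate-wise. Since $G$ is a direct product and $m$ is a multiplier, multiplication by $m^s$ acts on $G$ by acting on each coordinate separately: on an element $(x,h)$ it produces $(m^s x \bmod p, m^s h)$. The first observation to record is that by the definition of $s$ as the smallest positive integer with $m^s \equiv 1 \pmod{\exp(H)}$, the element $m^s$ fixes every $h \in H$ (since raising any group element of $H$ to the power $m^s$ is the same as raising it to the power $1$, as $\exp(H)$ kills the difference). Hence the $H$-coordinate is left untouched by $\br{m^s}$, and every orbit is contained in a single ``slice'' $\Znum_p \times \{h\}$.

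With the $H$-coordinate frozen, the orbit of $(x,h)$ is exactly $(\cO, h)$ where $\cO$ is the orbit of $x$ under multiplication by $m^s$ in $\Znum_p$. This immediately gives the stated form $(\cO,h)$. Next I would analyze the orbits of $\br{m^s}$ on $\Znum_p$ itself. The element $0 \in \Znum_p$ is fixed, giving a singleton orbit $\{0\}$; pairing this with each $h \in H$ yields exactly $|H|$ orbits of size $1$, namely the $(0,h)$. For the nonzero residues, multiplication by $m^s$ acts on the cyclic group $\Znum_p^{\times}$ of order $p-1$. Because $\gcd(p,|H|)=1$ and $m$ is a multiplier (hence coprime to $v$, so coprime to $p$), $m^s$ is a unit mod $p$, and its orbits on $\Znum_p^{\times}$ are the cosets of the cyclic subgroup it generates. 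All such cosets have the same cardinality, equal to the multiplicative order of $m^s$ modulo $p$, which is precisely $o = \ord_p(m^s)$. Transporting this back to $G$, every orbit $(\cO,h)$ with $\cO \neq \{0\}$ has size $o$, completing the count.

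The step requiring the most care is the claim that $m^s$ fixes $H$ pointwise and, relatedly, that $s$ is well-defined. One must check that $m^s \equiv 1 \pmod{\exp(H)}$ genuinely forces $h^{m^s} = h$ for all $h \in H$: writing $m^s = 1 + t\,\exp(H)$ and using that $h^{\exp(H)}$ is the identity gives $h^{m^s} = h \cdot (h^{\exp(H)})^t = h$, so the fix is immediate once one invokes the definition of the exponent. The existence of $s$ follows because $\gcd(m,\exp(H))=1$ (as $m$ is coprime to $v$ and $\exp(H) \mid |H| \mid v$), so $m$ is a unit modulo $\exp(H)$ and some power returns to $1$. I expect this coprimality bookkeeping — verifying that $m$ is invertible both mod $p$ and mod $\exp(H)$ so that the two coordinate actions are genuine permutations of equal-size orbits — to be the only subtle point; the rest is the routine observation that an action on a direct product that is trivial on one factor has orbits fibered over that factor, with the fiber orbits governed by the multiplicative order on the remaining cyclic factor.
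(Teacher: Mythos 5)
Your proof is correct and follows essentially the same route as the paper's: $m^s \equiv 1 \pmod{\exp(H)}$ makes $\br{m^s}$ act trivially on $H$, so orbits fiber over $H$, and since $p$ is prime the nonzero orbits in $\Znum_p$ are cosets of $\langle m^s\rangle$ in $\Znum_p^\times$, all of size $\ord_p(m^s)$. The only difference is that you spell out the coprimality bookkeeping (well-definedness of $s$, invertibility of $m$) that the paper leaves implicit.
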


\begin{proof}
  The proof of this is the same as for Theorem~\ref{thm352}.  
%% Whatever $H$ is, the order of $m$ acting on $H$ will divide the
%% exponent of multiplicative group of residues 
%% $\Znum_{|H|}^*$, given by the 
%% Carmichael function $\lambda(|H|)$ \cite{carmichael}.
The group of multipliers generated by $m^s$ will fix all $h \in H$
Because $p$ is prime, all the nonzero orbits of $\Znum_p$ under this group
will have the same size, some divisor of $p-1$.
\end{proof}

Now for any $(v,k,\lambda)$, if we can find a prime
$p|v$ and multiplier $m$ for which $m^s$ has a reasonably large order
mod $p$, we can look at differences of the 1-orbits and $o$-orbits and
try to get a contradiction: if there are $a$ orbits of size $o$, and
$b$ 1-orbits, then we have:

\begin{theorem}\label{thm:zph}
  Let $G = \Znum_p \times H$, where $H$ is abelian and $\gcd(p,|H|)=1$.  Let $m$ be a
  multiplier of a $(v,k,\lambda)$ difference set, and $s$ be the
  smallest positive integer for which $m^s \equiv 1
  \pmod{\exp(H)}$, and $o = \ord_p(m^s)$.  If there is no solution
  in positive integers $a$ and $b$ to:
\begin{eqnarray}
  k & = & a o + b \label{eq:k} \\
b(b-1)  & \leq & \lambda  (|H|-1) \label{eq:b} \\
a \cdot o(o-1)  & \leq & \lambda (p-1) \label{eq:a}
\end{eqnarray}
then no $(v,k,\lambda)$ difference set exists in $G$.
\end{theorem}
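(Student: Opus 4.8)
The plan is to generalize the counting argument used for Theorem~\ref{thm352} directly, using the orbit structure furnished by the preceding Lemma. Suppose for contradiction that a $(v,k,\lambda)$ difference set $D$ exists in $G = \Znum_p \times H$. Since $m$ is a multiplier, some translate of $D$ is fixed by the group $\br{m}$, hence by its subgroup $\br{m^s}$; I may therefore assume $D$ itself is a union of orbits of $G$ under $\br{m^s}$. By the Lemma these orbits are exactly the $|H|$ singletons $(0,h)$ and the remaining orbits $(\cO,h)$, each of uniform size $o = \ord_p(m^s)$. Let $a$ be the number of size-$o$ orbits appearing in $D$ and $b$ the number of singleton orbits. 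Counting elements immediately gives $k = ao + b$, which is equation~\eqref{eq:k}.

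Next I would extract the two inequalities from a careful bookkeeping of differences. The key observation is that the differences of elements of $D$ split according to their first coordinate. A singleton orbit contributes the element $(0,h)$; any two distinct singletons $(0,h_1)$ and $(0,h_2)$ produce a difference $(0, h_1 - h_2)$ whose first coordinate is zero and whose second coordinate is nonzero. There are $b(b-1)$ such ordered differences, and every one lands among the $|H|-1$ nonidentity elements of the form $(0,h)$; since each nonidentity element of $G$ occurs exactly $\lambda$ times as a difference, these elements can absorb at most $\lambda(|H|-1)$ occurrences, giving $b(b-1) \leq \lambda(|H|-1)$, which is~\eqref{eq:b}. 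For~\eqref{eq:a}, I would look instead at the elements $(x,0)$ with $x \neq 0$: within a single size-$o$ orbit $(\cO,h)$, all $o(o-1)$ internal differences have the form $(x,0)$ because the second coordinate is fixed, so the $a$ orbits together contribute $a \cdot o(o-1)$ differences landing among the $p-1$ elements $(x,0)$, forcing $a \cdot o(o-1) \leq \lambda(p-1)$.

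Having shown that any difference set in $G$ yields nonnegative integers $a,b$ satisfying \eqref{eq:k}--\eqref{eq:a}, the contrapositive is exactly the statement: if no such $a,b$ exist then no difference set exists. I would note that $a,b$ are a priori nonnegative rather than strictly positive (the degenerate cases $a=0$ or $b=0$ are allowed and handled by the same counting), and mention this so the ``positive integers'' phrasing in the statement is read as nonnegative where appropriate.

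The one point requiring genuine care — and the step I expect to be the main obstacle — is justifying that a difference internal to a \emph{single} orbit, or between two singletons, has the claimed first or second coordinate structure, and more importantly that these two families of target elements are counted without double-use of the bound $\lambda$. Differences of the form $(0,h)$ with $h\neq 0$ and differences of the form $(x,0)$ with $x \neq 0$ are disjoint families of group elements, so the constraints~\eqref{eq:b} and~\eqref{eq:a} are independent and may be imposed simultaneously; I would make this disjointness explicit. A subtler issue is that inequalities~\eqref{eq:b} and~\eqref{eq:a} account only for differences \emph{within} the singleton set and \emph{within} individual $o$-orbits respectively, ignoring cross-differences between an $o$-orbit and a singleton or between two distinct $o$-orbits. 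This is legitimate precisely because we are deriving necessary conditions: every stated difference-count is a lower-bound obligation that the full multiset of differences must already satisfy, so discarding the remaining cross-terms only weakens the count and keeps the inequalities valid. Making this ``necessary condition'' logic clean is where the proof must be written carefully, even though each individual calculation is routine.
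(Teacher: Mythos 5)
Your proof is correct and follows essentially the same route the paper intends: the paper gives no separate proof of Theorem~\ref{thm:zph}, treating it as the direct generalization of the orbit-counting argument in Theorem~\ref{thm352} via the preceding Lemma, which is exactly what you carried out (translate fixed by $\br{m^s}$, orbit decomposition giving $k=ao+b$, and the two capacity bounds from differences of the form $(0,h)$ and $(x,0)$). Your observation that $a$ and $b$ should properly be read as nonnegative integers is a fair point of precision that the paper's statement glosses over.
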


This method will be most useful when $\lambda$ is small, since each
element can only occur $\lambda$ times as a difference, so whatever
the choice of orbits either elements of the form $(x,0)$ or $(0,h)$
are likely to occur too many times.  Still, when $n$ and $v$ have
large prime factors ($n$ so that we have a known multiplier, and $v$
so that we have a suitable $p$ to use in Theorem~\ref{thm:zph}), it
can still often be applied.  

%% Another useful condition is that
%% when $v$ is squarefree, $G$ must be cyclic, so by a theorem of Xiang and
%% Chen \cite{xc95} the multiplier group has order at most $k$.
%% If a multiplier $m$ has order larger than $k$ modulo a divisor of $v$,
%% then no such difference set exists.

When Theorem~\ref{thm:zph} fails, if $G$ is cyclic we will sometimes use the theorem of
Xiang and Chen \cite{xc95}:

\begin{theorem}\label{thm:multgp}
  Let $D$ be a $(v,k,\lambda)$ difference set in a cyclic group $G$
  with multiplier group $M$. Except for the $(21,5,1)$ difference set,
  $|M|\leq k$.
\end{theorem}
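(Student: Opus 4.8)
The plan is to reduce everything to the orbit structure of $D$ under $M$, exactly as in the proof of Theorem~\ref{thm352} and the Lemma, and then to combine an orbit-counting identity with the difference-set replication condition. First I would use the quoted fact that some translate of $D$ is fixed by $M$ to assume without loss of generality that $MD=D$, so that $D$ is a disjoint union of orbits of $G=\Znum_v$ under the multiplication action of $M\le(\Znum_v)^*$. The basic computation is that an element $x$ of order $d\mid v$ has stabilizer $\{m\in M:m\equiv 1 \pmod d\}=\ker(M\to(\Znum_d)^*)$, so its orbit has size $|M|/|S_d|$, where $S_d$ is that kernel; in particular a generator of $G$ (the case $d=v$) lies in an orbit of size exactly $|M|$. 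Hence if $D$ contains any element whose orbit has full size $|M|$ — for instance any generator of $G$ — then that orbit alone forces $|M|\le k$ and we are done.

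So the real content is the case in which every orbit meeting $D$ has size strictly less than $|M|$, equivalently every orbit stabilizer $S_i$ is nontrivial. Here I would record two facts. Since $D-D$ covers all of $G$, the set $D$ generates $G$, so the pointwise stabilizer $\bigcap_i S_i=\{m\equiv 1\pmod{\exp(G)}\}=\{1\}$; that is, $M$ acts \emph{faithfully} on $D$. Writing $D=\bigsqcup_{i=1}^r\cO_i$ with (well defined, since $M$ is abelian) orbit stabilizers $S_i$ and $|\cO_i|=|M|/|S_i|$, summing gives the identity
\begin{equation}
  k \;=\; \sum_{i=1}^r \frac{|M|}{|S_i|}, \qquad\text{equivalently}\qquad \frac{k}{|M|} \;=\; \sum_{i=1}^r \frac{1}{|S_i|}. \label{eq:xc}
\end{equation}
Thus proving $|M|\le k$ amounts to proving $\sum_i 1/|S_i|\ge 1$, while $\bigcap_i S_i=\{1\}$. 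This last step is where I expect the difficulty: group theory alone does not suffice, since an abelian $M$ can have proper subgroups intersecting trivially with $\sum 1/|S_i|<1$ (already $M\cong\Znum_6$ with $S_1,S_2$ of orders $2,3$ gives $5/6$). The resolution must come from the difference-set structure, and indeed that $\Znum_6$ configuration is exactly the exceptional $(21,5,1)$ plane, where the order-$7$ and order-$3$ elements supply a single $3$-orbit and a single $2$-orbit with $|M|=6>5=k$.

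To force \eqref{eq:xc} to yield $\sum_i 1/|S_i|\ge 1$ I would feed in the replication number through the same subgroup-difference counting used in Theorem~\ref{thm:zph}: for the subgroup $\br{x}$ generated by a point of $\cO_i$, the $|\cO_i|(|\cO_i|-1)$ nonzero differences internal to $\cO_i$ all lie in $\br{x}\setminus\{0\}$ and each element is hit at most $\lambda$ times, giving $|\cO_i|(|\cO_i|-1)\le\lambda(\ord(x)-1)$; dually, counting differences across the cosets of a fixed subgroup bounds how many small orbits can coexist. I would then run these inequalities against \eqref{eq:xc} to show that $\sum_i 1/|S_i|<1$ is numerically possible only for a single tiny configuration — two orbits over $v=pq$ whose images in $(\Znum_p)^*$ and $(\Znum_q)^*$ have coprime order — and finally identify that configuration as $(21,5,1)$ by checking the surviving parameter triple directly. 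The hard part is making the counting sharp enough to exclude everything but this one case; a cleaner alternative I would keep in reserve is the Galois/character approach, noting that $\sigma_m(\chi(D))=\chi(g_m)\chi(D)$ for a primitive character $\chi$, so the principal ideal $(\chi(D))\mid(n)$ is fixed by the subgroup of $\mathrm{Gal}(\Qnum(\zeta_v)/\Qnum)\cong(\Znum_v)^*$ corresponding to $M$, which constrains $|M|$ through the factorization of $n=k-\lambda$.
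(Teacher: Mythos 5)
Your opening move is correct, and for the part it covers it is actually more elementary than the published route: once a translate of $D$ is fixed by $M$, any $d_i$ with $\gcd(d_i,v)=1$ has trivial stabilizer, so $Md_i$ is a subset of $D$ of size exactly $|M|$, and $|M|\le k$ follows with no Galois theory at all. The paper's proof of this statement is by citation to Xiang and Chen \cite{xc95}, whose argument is reproduced in the proof of Theorem~\ref{thm:gh}: there one shows ${\rm Gal}\ \Qnum(\zeta_v)/K=M$ for $K=\Qnum(\chi(D),\chi^2(D),\ldots,\chi^{v-1}(D))$ and observes that the primitive root of unity $\chi(d_i)$ is a root of the degree-$k$ polynomial $f(X)=\prod_i(X-\chi(d_i))\in K[X]$. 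Your orbit count is a clean substitute for exactly that step.

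The genuine gap is everything after that, and it is not a repairable detail. The entire difficulty of the theorem is concentrated in the case you defer to counting: a multiplier-fixed difference set in which every $M$-orbit has size less than $|M|$, which in particular means no element of $D$ is coprime to $v$. The paper does not dispose of this case by counting; it invokes Theorem 1 of Cohen \cite{scohen}, which says precisely that, apart from the $(21,5,1)$ set $\{3,6,7,12,14\}$, every cyclic difference set contains a residue prime to $v$. That is the substantive external ingredient of the Xiang--Chen proof (it is a standalone paper of its own), and your proposal neither cites it nor proves it. Your replacement sketch --- the within-orbit inequality $|\cO_i|(|\cO_i|-1)\le\lambda(\ord(x)-1)$ run against the orbit-size identity $k=\sum_i|M|/|S_i|$ --- is plausible when $v$ is a product of two primes (there the relevant inequalities are exactly tight at $(21,5,1)$: with $a=3$, $b=2$, one has $a(a-1)=\lambda\cdot 6$, $b(b-1)=\lambda\cdot 2$, $2ab=\lambda\,\phi(21)$), but nothing in the proposal handles general $v$ (several prime factors, prime powers, many orbits), and you concede this yourself (``the hard part is making the counting sharp enough''). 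In effect you have replaced the one deep cited ingredient by an unproved claim essentially equivalent to it. The ``reserve'' Galois observation does not close the gap either: knowing that the ideal $(\chi(D))$ divides $(n)$ and is fixed by the subgroup corresponding to $M$ bounds nothing by $k$; in \cite{xc95} the degree bound comes only because Cohen's theorem supplies a primitive $v$-th root of unity as a root of a degree-$k$ polynomial over the fixed field. The fix is simple: cite Cohen's theorem to conclude that your ``hard case'' occurs only for $(21,5,1)$; your first paragraph then finishes the proof.
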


%% An anonymous referee pointed out that the proof of this theorem
%% applies to contracted multipliers as well (see Section~VI.5 of
%% \cite{bjl} for information about contracted multipliers):

This theorem may be extended to contracted multipliers as well 
(see Section~VI.5 of
\cite{bjl} for information about difference lists and contracted
multipliers).  

\begin{theorem}\label{thm:gh}
  Let $D$ be a $(v,k,\lambda)$ difference set in a cyclic group $G$, 
and $H$ be the subgroup of $G$ of order $h$ and index $u$.
Then with the same exception, the group
  $M$ of $G/H$-multipliers has order $|M| \leq k$.
\end{theorem}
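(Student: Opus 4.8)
The plan is to pass from $D$ to its contraction in the quotient group $G/H$ and then check that the proof of Theorem~\ref{thm:multgp} depends only on data that contraction preserves. Let $u = v/h = |G/H|$ and let $\rho \colon \Znum[G] \to \Znum[G/H]$ be the natural ring homomorphism induced by the quotient map. Writing $\bar D = \rho(D)$, applying $\rho$ to the defining identity $D D^{(-1)} = n + \lambda G$, and using that $\rho$ is $h$-to-one so that $\rho(G) = h\,(G/H)$, gives
\begin{equation}
\bar D\, \bar D^{(-1)} = n + \lambda h\,(G/H). \label{eq:contract}
\end{equation}
Thus $\bar D$ is a difference list in $G/H$ whose coefficients are nonnegative integers with total sum $k$ (see Section~VI.5 of \cite{bjl}). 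By definition a $G/H$-multiplier is an integer $t$, prime to $u$, with $t\bar D = \bar D + \bar g$ for some $\bar g \in G/H$; that is, $M$ is precisely the group of numerical multipliers of the list $\bar D$.

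Next I would reduce to the fixed-translate situation. The McFarland--Rice averaging argument that produces a translate of an abelian difference set fixed by its entire multiplier group uses only the group-ring identity and the abelian structure, so it applies essentially verbatim to $\bar D$: after replacing $\bar D$ by a suitable translate we may assume $t\bar D = \bar D$ for every $t \in M$. Now fix a point $x_0$ in the support of $\bar D$ and consider the map $t \mapsto t x_0$. Since $t \bar D = \bar D$ permutes the support preserving multiplicities, this map sends $M$ into a set of size at most $k$ (the support has at most $k$ distinct points, as the multiplicities are positive integers summing to $k$). By orbit--stabilizer, $|M|$ is the orbit length times the order of the stabilizer of $x_0$ in $M$, and the orbit length is at most $k$. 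Hence the theorem reduces to controlling the stabilizer of a support point, which is exactly the part of the Xiang--Chen analysis that produces the single exceptional configuration.

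The main obstacle is therefore to verify that the stabilizer-control step of \cite{xc95} survives contraction. Two points need care. First, one must confirm that their bound on multipliers fixing a point (and the accompanying character/eigenvalue estimates) uses only an identity of the form \eqref{eq:contract} together with the coefficient sum $k$, rather than the $0/1$ nature of an honest difference set; the fact that \eqref{eq:contract} has the same shape as the difference-set equation, with $k$ still the total coefficient sum and $n$ unchanged, is what makes this plausible. Second, one must check that the exceptional configuration escaping the bound descends to precisely the $(21,5,1)$ case and to nothing new, so that the exception in the contracted statement is identical to the one in Theorem~\ref{thm:multgp}. Granting these, \eqref{eq:contract} lets the argument of \cite{xc95} run unchanged and yields $|M| \le k$.
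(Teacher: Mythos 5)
Your setup is fine as far as it goes: the contracted list $\bar D$ satisfies the group-ring identity you state, Theorem 5.14 of \cite{bjl} lets you replace $\bar D$ by a translate fixed by $M$, and the orbit of a support point under $M$ does lie in a support of size at most $k$. But there is a genuine gap exactly where you defer to \cite{xc95}: orbit--stabilizer only gives $|M| \le k\,|{\rm Stab}_M(x_0)|$, and the ``stabilizer-control step'' you propose to borrow does not exist in that paper in the form you need. Xiang and Chen's argument is not an orbit--stabilizer analysis with a separate treatment of point stabilizers; it is a Galois-theoretic argument whose pivotal input is Theorem 1 of Cohen \cite{scohen} (every cyclic difference set other than the $(21,5,1)$ one contains an element prime to $v$), a result you never invoke. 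Without some such ingredient the stabilizer of $x_0$ is simply not controlled, and the bound does not follow.

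The fix is short, and it is where the exceptional case actually enters. Apply Cohen's theorem to $D$ itself, upstairs in $G$; this also dissolves your second worry, since the exception is detected before contraction and no ``descent of the exceptional configuration'' needs to be checked. Unless $D$ is the $(21,5,1)$ difference set, some $d_i$ is prime to $v$, hence $x_0 = \overline{d_i}$ is prime to $u$ and generates the cyclic group $G/H$. For such an $x_0$ the stabilizer is trivial: $t x_0 = x_0$ in $\Znum_u$ forces $t \equiv 1 \pmod{u}$, which is the identity of $M \le (\Znum/u\Znum)^*$. Now your orbit bound finishes the proof: $|M| = |Mx_0| \le |{\rm supp}(\bar D)| \le k$. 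The paper runs this same endgame in Galois language instead: $M \cong {\rm Gal}\ \Qnum(\zeta_u)/K$ where $K$ is generated by the character values $\chi^i(S)$, the polynomial $f(X) = \prod_{j}\left(X - \chi(\overline{d_j})\right)$ of degree $k$ lies in $K[X]$, and Cohen's theorem makes $\chi(\overline{d_i})$ a primitive $u$th root of unity which is a root of $f$, so $|M| = \left[\Qnum(\zeta_u):K\right] \le k$. Note also that your first worry (dependence on the $0/1$ nature of $D$) is moot in either formulation: only the fixed-translate property for difference lists and the total coefficient sum $k$ are used.
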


\begin{proof}
  The proof is exactly the same as the proof of
  Theorem~\ref{thm:multgp} in \cite{xc95}, replacing multipliers with contracted
  multipliers.  
$M$ is isomorphic to a subgroup of ${\rm Gal}\  \Qnum(\zeta_u)/\Qnum$,
where $\zeta_u$ is a primitive $u$th root of unity.
Let 
$$S = \overline{D} = \{ \overline{d_1},\overline{d_2},\ldots,
  \overline{d_k} \}$$ 
be the $(u,k,h,\lambda)$ difference list over $G/H$ obtained by
  sending the elements of $D$ to their image in $G/H$.  By Theorem
  5.14 of \cite{bjl}, we may assume that $S$ is fixed by $M$. Let
$\chi$ be a generator of the character group of
$G/H$, 
$K = \Qnum\left( \chi(S),\chi^2(S),\ldots,\chi^{u-1}(S)\right),$
and $\alpha_t$ be the field automorphism sending 
$\zeta_u \mapsto \zeta_u^t$.  As in \cite{xc95}, we may show
that ${\rm Gal}\  \Qnum(\zeta_u)/K = M$. If $t \in M$ it fixes $S$,
so $\alpha_t$ fixes $\chi(S)$. If $\alpha_t$ fixes $\chi^i(S)$ for
$i=1,2,\ldots,u-1$, then by Fourier inversion $t$ fixes $S$, and so is
in $M$.

Now let
$$f(X) = \prod_{i=1}^k  \left( X - \chi(\overline{d_i})\right).$$
The coefficients of $f(X)$ are elementary symmetric polynomials in the 
$\chi(\overline{d_i})$, which are fixed by $\alpha_t$
for any $t \in M$, so 
$f(X) \in K[X]$.

By Theorem 1 of Cohen~\cite{scohen}, if $D$ is not the (21,5,1)
difference set, then at least one of the $d_i$ is relatively prime to
$v$, and so $\chi(\overline{d_i})$ is a primitive $u$th root of unity.
It is also a root of $f(X)$, and so
$$|M| = \left[ \Qnum(\zeta_u):K\right] \leq \deg f(X) = k.$$
\end{proof}

\section{The Prime Power Conjecture}

A $(v,k,1)$ difference set is called a {planar abelian difference set}.
These exist if $n=k-1$
is a prime power, and the Prime Power Conjecture (PPC) is that these are the
only ones.  In \cite{gordon} it was shown that the PPC
is true for all groups for orders up to $2\cdot 10^6$, and in \cite{bg} for cyclic
groups for orders up to $2\cdot 10^9$.  Peluse \cite{2003.04929}
recently showed that the PPC is asymptotically true; the number of 
orders up to $N$ for which planar abelian difference sets exist is $O(N/\log N)$,
the same as the number of prime powers.

In these papers non-prime power orders were eliminated by a series of tests; see
\cite{gordon} for details.  The initial tests only depended on the
prime factors of $n$ and $v$, and were very fast.  
Tables 1 and 2 in \cite{gordon} gave lists of $(v,k,1)$ planar abelian
difference set parameters which could not be eliminated with these tests.
To show they did not exist, Proposition 5.11 of Lander \cite{lander} was
used:
\begin{theorem}\label{thm:511}
If $t_1, t_2, t_3, t_4$ are numerical multipliers of a $(v,k,1)$
difference set in $G$, and 
$$t_1-t_2 \equiv t_3-t_4 \pmod  {\exp (G)},$$
then 
$\exp(G)$ divides ${\rm lcm}(t_1-t_2,t_1-t_3)$.
\end{theorem}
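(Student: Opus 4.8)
The plan is to exploit the defining uniqueness property of a planar ($\lambda=1$) difference set together with the fact that a multiplier permutes a suitable translate of $D$. First I would replace $D$ by the translate fixed by the full multiplier group $M$, so that each $t_i$ acts by $t_i D = D$; in particular $t_i d \in D$ for every $d \in D$. I would also record at the outset that $D$ generates $G$: since $\lambda=1$, every nonzero element of $G$ occurs as a difference $d_i - d_j$, so $G \subseteq \br{D}$, and hence $\br{D} = G$. Note $G$ is abelian throughout, so I may work additively.

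The heart of the argument is a pointwise analysis. Fix $d \in D$ and consider the four elements $t_1 d, t_2 d, t_3 d, t_4 d$, all of which lie in $D$. Because $\ord(d) \mid \exp(G)$, the hypothesis $t_1 - t_2 \equiv t_3 - t_4 \pmod{\exp(G)}$ gives $(t_1 - t_2)d = (t_3 - t_4)d$, i.e.\ the two differences $t_1 d - t_2 d$ and $t_3 d - t_4 d$ coincide in $G$. Now I would split into two cases. If this common value is nonzero, then it is an element of $G$ written as a difference of elements of $D$ in two ways, so by the $\lambda=1$ uniqueness the representations agree: $t_1 d = t_3 d$ and $t_2 d = t_4 d$, whence $(t_1 - t_3) d = 0$ and $\ord(d) \mid t_1 - t_3$. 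If instead the common value is zero, then $(t_1 - t_2)d = 0$ and $\ord(d) \mid t_1 - t_2$. In either case $\ord(d) \mid {\rm lcm}(t_1 - t_2,\, t_1 - t_3)$.

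To finish I would globalize over $d$. Setting $L = {\rm lcm}(t_1 - t_2,\, t_1 - t_3)$, the previous step shows $L\,d = 0$ for every $d \in D$. Thus $D$ is contained in the subgroup of elements annihilated by $L$, and since that subgroup contains the generating set $D$ it must be all of $G$. But $L$ annihilating every element of $G$ is precisely the statement $\exp(G) \mid L$, which is the desired conclusion.

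The main obstacle, really the only subtlety, is keeping the case split clean: one must notice that the $\lambda=1$ uniqueness only applies when the common difference is nonzero, and handle the degenerate case $(t_1-t_2)d=0$ separately so that the conclusion $\ord(d)\mid L$ holds uniformly over all $d$. Everything else, namely the reduction to the fixed translate and the passage from ``$L$ annihilates every generator'' to ``$\exp(G)\mid L$'', is routine.
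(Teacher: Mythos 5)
Your proof is correct, and it is essentially the standard argument for this result: the paper itself states this theorem without proof, quoting it as Proposition~5.11 of Lander \cite{lander}, whose proof proceeds exactly along your lines (pass to the translate fixed by the multiplier group, use $(t_1-t_2)d=(t_3-t_4)d$ together with the $\lambda=1$ uniqueness of difference representations to get $\ord(d)\mid t_1-t_2$ or $\ord(d)\mid t_1-t_3$ for each $d\in D$, then use that $D$ generates $G$). Your explicit handling of the degenerate case where the common difference vanishes, and the observation that $\lambda=1$ forces $\br{D}=G$, are exactly the points on which the classical argument rests, so there is nothing to add.
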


For each case a large number of multipliers were generated, until either a prime known
not to be an extraneous multiplier was discovered, or two pairs of
multipliers with the same difference modulo $\exp(G)$ were found, so
that Theorem~\ref{thm:511} could be applied.
These calculations required a substantial amount of computation time
and memory.

With Theorem~\ref{thm:zph} the hard cases from \cite{gordon} can be eliminated
quickly. 
To illustrate the power of the theorem, Table~\ref{tab:thm3} gives
parameters used in Theorem~\ref{thm:zph} to eliminate some of the
parameters in the tables in \cite{gordon};
with the value of $o$ in
the last column, it is easy to check that there are no positive
integers $a$ and $b$ solving equations (\ref{eq:k}), (\ref{eq:b}) and (\ref{eq:a}).

\begin{table}
\begin{center}
\begin{tabular}{|c|c|c|c|c|}
\hline
$k$ & $p$ & $|H|$  & $m^s$ & $\ord_p(m^s)$ \\\hline
$2436$ & $5931661$ & $1$ & $5^1$ & $435$ \\
$24452$ & $199291951$ & $3$ & $499^1$ & $6175$ \\
$45152$ & $22651$ & $90003$ & $277^{789}$ & $25$ \\
$56408$ & $24781$ & $128397$ & $4339^{63}$ & $295$ \\
$58724$ & $450601$ & $7653$ & $8389^{75}$ & $751$ \\ \hline
$2444$ & $109$ & $54777$ & $7^{465}$ & $9$ \\
$3234$ & $4759$ & $2197$ & $61^{507}$ & $61$ \\
$72012$ & $35911$ & $144403$ & $673^{245}$ & $513$ \\
$73482$ & $149113$ & $36211$ & $373^{9}$ & $2071$ \\
\hline
\end{tabular}
\caption{Small $(v,k,1)$ parameters from Tables 1 and 2 of \cite{gordon} eliminated by Theorem~\ref{thm:zph}}
\label{tab:thm3}
\end{center}
\end{table}

Using
Arasu's method allows the computations to be redone in a different
manner.  In addition, it requires far less work for the hard cases, so
it was possible to take the computations further.  
Replicating the search up to $2\cdot 10^6$ took under a minute on a workstation.
A longer run using the fast tests from \cite{gordon} and
Theorem~\ref{thm:zph}
eliminated every order up to $2 \cdot 10^{10}$
except for the ones given in Table~\ref{tab:ppc},
which were then eliminated using Theorem~\ref{thm:511}.
Note that the first two values of $k$ were missing from the tables in
\cite{gordon}.

%% Solely
%% applying Bruck-Ryser-Chowla (BRC) and Theorem~\ref{thm:zph} eliminated all
%% but 54 non-prime power orders up to $2 \cdot 10^6$, and took under a
%% minute on a workstation.  Applying the fast tests from \cite{gordon}
%% eliminated all but two cases.

\begin{table}
\begin{center}
\begin{tabular}{|c|c|c|}
\hline
%% $(v,k,\lambda)$ & $n$ & $v$ \\\hline
%% $(1202061164611,1096386,1)$ & $5 \cdot 219277$ & $79 \cdot 109 \cdot 1951 \cdot 71551$ \\
%% $(1744495469643,1320794,1)$ & $373 \cdot 3541$ & $3 \cdot 11551 \cdot 50341831$ \\
%% $(5655813836221,2378196,1)$ & $5 \cdot 475639$ & $211 \cdot 631 \cdot 3319 \cdot 12799$ \\
%% $(434569203466653,20846324,1)$ & $61 \cdot 341743$ & $3 \cdot 88951 \cdot 1628496601$ \\
%% $(1602202637543053,40027524,1)$ & $107 \cdot 374089$ & $7 \cdot 13\cdot 3541 \cdot 54163 \cdot 91801$ \\ 
%% $(8014321252895971993,2830957656,1)$ & $5 \cdot 566191531$ & $109^{2} \cdot 1171 \cdot 1231 \cdot 1951 \cdot 239851$\\
%% $(59298667244229770157,7700562788,1)$ &  $9817 \cdot 784411$ & $3 \cdot 61^{2} \cdot 1831 \cdot 1703287^{2}$\\
$k$ & $n$ & $v$ \\\hline
$1096386$ & $5 \cdot 219277$ & $79 \cdot 109 \cdot 1951 \cdot 71551$ \\
$1320794$ & $373 \cdot 3541$ & $3 \cdot 11551 \cdot 50341831$ \\
$2378196$ & $5 \cdot 475639$ & $211 \cdot 631 \cdot 3319 \cdot 12799$ \\
$20846324$ & $61 \cdot 341743$ & $3 \cdot 88951 \cdot 1628496601$ \\
$40027524$ & $107 \cdot 374089$ & $7 \cdot 13\cdot 3541 \cdot 54163 \cdot 91801$ \\ 
$2830957656$ & $5 \cdot 566191531$ & $109^{2} \cdot 1171 \cdot 1231 \cdot 1951 \cdot 239851$\\
$7700562788$ &  $9817 \cdot 784411$ & $3 \cdot 61^{2} \cdot 1831 \cdot 1703287^{2}$\\
\hline
\end{tabular}
\caption{$(v,k,1)$ parameters up to $k=2 \cdot 10^{10}$ not eliminated by Theorem~\ref{thm:zph}}
\label{tab:ppc}
\end{center}
\end{table}

Unlike the fast tests in \cite{gordon}, for which the number passing
was roughly linear in the bound on $n$, Theorem~\ref{thm:zph} gets more
effective for larger orders, since it becomes increasingly likely that
$v$ will have a large prime factor $p$ for which some prime divisor of
$n$ has large order mod $p$.  All values of $k$ between $7.7 \cdot
10^9$ and $2 \cdot 10^{10}$ were eliminated, and a heuristic argument
suggests that the number of cases up to order $n$ passing
Theorem~\ref{thm:zph} will be at most $O(\log n)$.

\section{Biplanes}

Theorem~\ref{thm352} was also shown by Hughes in \cite{hughes}.
Computations by Hughes and Dickey reported in that paper showed that no
abelian $(v,k,2)$ difference sets exist with order less than 5000,
except for the known cases $k=3,4,5, 6$ and $9$.  They give few details
about their method;
%, except to say that they ``compute the number of
%elements of each order dividing $v$ which must be in a difference
%set''.  
it is possible that their method was something similar to
that of Arasu.

%% \begin{table}
%% \begin{center}
%% \begin{tabular}{|c|c|c|}
%% \hline
%% $(v,k,\lambda)$ & $n$ & $v$ \\\hline
%% $(29891,245,2)$ & $3^{5}$ & $71 \cdot 421$ \\
%% % param below does not exist by size of contracted mult group
%% $(525826,1026,2)$ & $2^{10}$ & $2 \cdot 7 \cdot 23^{2} \cdot 71$ \\
%% $(609961,1105,2)$ & $1103$ & $11^{2} \cdot 71^{2}$ \\
%% % param below does not exist by size of contracted mult group
%% $(8394754,4098,2)$ & $2^{12}$ & $2 \cdot 2017 \cdot 2081$ \\\hline
%% % param below does not exist by size of contracted mult group
%% $(141717031,16836,2)$ & $2 \cdot 19 \cdot 443$ & $141717031$ \\
%% $(1124921029,47433,2)$ & $47431$ & $13693 \cdot 82153$ \\
%% % param below does not exist by size of contracted mult group
%% $(2147581954,65538,2)$ & $2^{16}$ & $2 \cdot 7 \cdot 67 \cdot 491 \cdot 4663$ \\
%% $(3699075079,86013,2)$ & $86011$ & $7 \cdot 71 \cdot 883 \cdot 8429$ \\
%% % param below does not exist by size of contracted mult group
%% $(34360131586,262146,2)$ & $2^{18}$ & $2 \cdot 11 \cdot 11939 \cdot 130817$ \\
%% % param below does not exist by size of contracted mult group
%% $(549757386754,1048578,2)$ & $2^{20}$ & $2 \cdot 523777 \cdot 524801$\\
%% \hline
%% \end{tabular}
%% \caption{Difficult Biplane Cases}
%% \label{tab:biplane}
%% \end{center}
%% \end{table}

\begin{table}
\begin{center}
\begin{tabular}{|c|c|c|}
\hline
$k$ & $n$ & $v$ \\\hline
$47433$	&	$47431$	&	$13693 \cdot 82153$\\
$86013$	&	$86011$	&	$7 \cdot 71 \cdot 883 \cdot 8429$\\
$890196$ & $2 \cdot 445097$ & $396224014111$\\
$1120521$	&	$1120519$	&	$83059 \cdot 7558279$\\
$1767189$	&	$1767187$	&	$7 \cdot 223068228181$\\
%thm4.38  $2966481$	&	$2966479$	&	$23 \cdot 67 \cdot 30851 \cdot 92551$\\
%thm4.38  $17266129$	&	$17266127$	&	$29 \cdot 4327 \cdot 26987 \cdot 44017$\\
%thm4.38  $128973349$	&	$128973347$	&	$109 \cdot 127 \cdot 211 \cdot 379 \cdot 2741^2 $\\
%thm4.38  $282737581$	&	$282737579$	&	$11 \cdot 29^2 \cdot 37 \cdot 43 \cdot 71 \cdot 3571 \cdot 10711$\\
%thm4.38  $286957929$	&	$286957927$	&	$11^2 \cdot 79^2 \cdot 547 \cdot 751 \cdot 132721$\\
$937097469$	&	$937097467$	&	$19942759 \cdot 22016804833$\\
%thm4.38  $1285560945$	&	$1285560943$	&	$7 \cdot 29 \cdot 631^2 \cdot 743 \cdot 883 \cdot 15583$\\
%thm4.38  $4015660381$ & $4015660379$ & $11^{2} \cdot 23 \cdot 191 \cdot 331 \cdot 547 \cdot 911 \cdot 91961$\\
%thm4.38  $6821604993$ & $3 \cdot 7 \cdot 324838333$ & $11 \cdot 23 \cdot 43^{2} \cdot 71 \cdot 239 \cdot 331 \cdot 2003 \cdot 4421$\\
%thm4.38  $8281758349$ & $8281758349$ & $7 \cdot 11^{2} \cdot 79 \cdot 263 \cdot 26203 \cdot 74370011$\\
\hline
\end{tabular}
\caption{Open $(v,k,2)$ cases for $k \leq 10^{10}$}
\label{tab:biplane}
\end{center}
\end{table}

A run up to order 
$10^{10}$
eliminated all but $24$ parameters.
%, the first four of which were dealt
%with by Hughes and Dickey.  
Most of the rest were dealt with using
Theorems 4.19 and 4.38 of Lander \cite{lander}.
Table~\ref{tab:biplane} gives the remaining open cases.

Theorem~\ref{thm:gh} was an important tool for eliminating open cases in this
and the next table.
Biplanes of order a power of $4$, such as $(525826,1026,2)$, pass
Theorem~\ref{thm:zph}, and have no known multipliers, so the standard
methods are no help. However, in each case up to order $2^{30}$
we have that $G$ is cyclic, 2 is a $G/H$ multiplier for $H$ the
group of order 2 by the Contracted Multiplier Theorem (Corollary 5.13
of \cite{bjl}), 
and the order $\ord_{v/2}(2)$ is larger than $k$, showing
that those biplanes do not exist.

\section{General Parameters}

Theorem~\ref{thm:zph} may be applied for larger $\lambda$; while more
parameters will slip through because of a lack of known multipliers or
Equations (\ref{eq:b}) and (\ref{eq:a}) being less restrictive, many
may still be eliminated.  A run was done for difference sets with
$\lambda=3$ up to order $10^{10}$.  There were 269 parameters that
passed Theorem~\ref{thm:zph}, but most were then eliminated with
Theorems~\ref{thm:multgp} and \ref{thm:gh}, the Lander tests, and the
Mann test (\cite{bjl}, Theorem VI.6.2).
Table~\ref{tab:triplane} shows the six remaining cases.

\begin{table}
\begin{center}
\begin{tabular}{|c|c|c|}
\hline
$k$ & $n$ & $v$ \\\hline
$120$	&	$3^{2} \cdot 13$	&	$3^{2} \cdot 23^{2}$\\
$441$	&	$2 \cdot 3 \cdot 73$	&	$71 \cdot 911$\\
$2350$	&	$2347$	&	$1840051$\\
%$9219$	&	$2^{10} \cdot 3^{2}$	&	$3^{4} \cdot 5 \cdot 23 \cdot 3041$\\
%Mann test $62211$	&	$2^{8} \cdot 3^{5}$	&	$3 \cdot 11 \cdot 23 \cdot 37 \cdot 71 \cdot 647$\\
%$65539$	&	$2^{16}$	&	$5 \cdot 47 \cdot 463 \cdot 13159$\\
%$104979$	&	$2^{4} \cdot 3^{8}$	&	$3 \cdot 5 \cdot 11 \cdot 3191 \cdot 6977$\\
%$186627$	&	$2^{8} \cdot 3^{6}$	&	$3 \cdot 5 \cdot 23 \cdot 2711 \cdot 12413$\\
%Mann test $322284$	&	$3^{2} \cdot 35809$	&	$3^{3} \cdot 5^{4} \cdot 11 \cdot 37 \cdot 71^{2}$\\
%Mann test $559875$	&	$2^{8} \cdot 3^{7}$	&	$3 \cdot 191 \cdot 577 \cdot 316031$\\
$740406$	&	$3^{2} \cdot 82267$	&	$3^{4} \cdot 19391 \cdot 116341$\\
%$995331$	&	$2^{12} \cdot 3^{5}$	&	$3 \cdot 2399 \cdot 45884063$\\
%Mann test $2405442$	&	$3^{2} \cdot 267271$	&	$3^{4} \cdot 5^{2} \cdot 11 \cdot 113^{2} \cdot 6781$\\
$3793567$	&	$2^{2} \cdot 948391$	&	$5^{2} \cdot 251 \cdot 397 \cdot 463 \cdot 4159$\\
%$9437187$	&	$2^{20} \cdot 3^{2}$	&	$3^{2} \cdot 5 \cdot 103 \cdot 137 \cdot 223 \cdot 209647$\\
%$16777219$	&	$2^{24}$	&	$5 \cdot 3356263 \cdot 5591041$\\
%$19131879$	&	$2^{2} \cdot 3^{14}$	&	$3 \cdot 5 \cdot 23 \cdot 59 \cdot 463 \cdot 599 \cdot 21613$\\
%Mann test $61964634$	&	$3^{2} \cdot 6884959$	&	$3^{2} \cdot 5^{2} \cdot 11 \cdot 2957^{2} \cdot 59141$\\
%$67108867$	&	$2^{26}$	&	$3^{2} \cdot 5^{2} \cdot 23 \cdot 719 \cdot 3733 \cdot 108079$\\
%$84934659$	&	$2^{20} \cdot 3^{4}$	&	$3 \cdot 5^{3} \cdot 31 \cdot 71 \cdot 7307 \cdot 398711$\\
%$172186887$	&	$2^{2} \cdot 3^{16}$	&	$3 \cdot 5 \cdot 31 \cdot 37 \cdot 683 \cdot 2711 \cdot 310223$\\
%Mann test $197812434$	&	$3^{2} \cdot 21979159$	&	$3^{4} \cdot 5^{2} \cdot 11 \cdot 2137^{2} \cdot 128221$\\
$289842739$	&	$2^{4} \cdot 18115171$	&	$3 \cdot 5 \cdot 23 \cdot 103^{2} \cdot 137 \cdot 223^{2} \cdot 1123$\\
%$335461242$	&	$3^{2} \cdot 37273471$	&	$3^{6} \cdot 5^{2} \cdot 11 \cdot 1013^{2} \cdot 182341$\\
%$339738627$	&	$2^{22} \cdot 3^{4}$	&	$3 \cdot 5 \cdot 53 \cdot 331 \cdot 863 \cdot 1291 \cdot 131231$\\
%$352356159$	&	$2^{2} \cdot 3^{4} \cdot 1087519$	&	$3 \cdot 5^{3} \cdot 31^{2} \cdot 71^{2} \cdot 113 \cdot 449^{2}$\\
%$764411907$	&	$2^{20} \cdot 3^{6}$	&	$3 \cdot 5 \cdot 11 \cdot 311 \cdot 14897 \cdot 254794753$\\
%$875269434$	&	$3^{2} \cdot 97252159$	&	$3^{2} \cdot 5^{2} \cdot 11 \cdot 37^{2} \cdot 467^{2} \cdot 345581$\\
%$1500708666$	&	$3^{2} \cdot 166745407$	&	$3^{2} \cdot 11 \cdot 397 \cdot 661 \cdot 41849 \cdot 690493$\\
%$1549681959$	&	$2^{2} \cdot 3^{18}$	&	$3 \cdot 5^{2} \cdot 47 \cdot 53 \cdot 1049 \cdot 10477 \cdot 389867$\\
%$1636287384$	&	$3^{2} \cdot 181809709$	&	$3^{5} \cdot 5^{2} \cdot 11 \cdot 313 \cdot 1321 \cdot 3697 \cdot 8737$\\
%$1662855060$	&	$3^{2} \cdot 184761673$	&	$3^{5} \cdot 37 \cdot 53 \cdot 71 \cdot 1453 \cdot 2341 \cdot 8009$\\
%$2664283656$	&	$3^{2} \cdot 296031517$	&	$3^{3} \cdot 23 \cdot 37 \cdot 89 \cdot 229^{2} \cdot 4513 \cdot 4889$\\
%$4294967299$	&	$2^{32}$	&	$3 \cdot 5 \cdot 751 \cdot 1143817 \cdot 477211307$\\
%$4950941034$	&	$3^{2} \cdot 550104559$	&	$3^{6} \cdot 5^{4} \cdot 11 \cdot 23^{2} \cdot 53^{2} \cdot 1097101$\\
%$5701845882$	&	$3^{4} \cdot 70393159$	&	$3 \cdot 5 \cdot 31 \cdot 53 \cdot 97^{2} \cdot 157 \cdot 353 \cdot 421 \cdot 2003$\\
\hline
\end{tabular}
\caption{Open $(v,k,3)$ cases for $k \leq 10^{10}$}
\label{tab:triplane}
\end{center}
\end{table}

%% \begin{table}
%% \begin{center}
%% \begin{tabular}{|c|c|c|}
%% \hline
%%$(v,k,\lambda)$ & $n$ & $v$ \\\hline
%% $(617895,1362,3)$ & $3^{2} \cdot 151$ & $3^{3} \cdot 5 \cdot 23 \cdot 199$ \\
%% $(1040175,1767,3)$ & $2^{2} \cdot 3^{2} \cdot 7^{2}$ & $3^{3} \cdot 5^{2} \cdot 23 \cdot 67$ \\
%% % param below does not exist by size of contracted mult group
%% $(1597971,2190,3)$ & $3^{7}$ & $3 \cdot 23 \cdot 23159$ \\
%% $(1840051,2350,3)$ & $2347$ & $1840051$ \\
%% % param below does not exist by size of contracted mult group
%% $(14359845,6564,3)$ & $3^{8}$ & $3 \cdot 5 \cdot 443 \cdot 2161$ \\
%% % param below does not exist by size of contracted mult group
%% $(129172971,19686,3)$ & $3^{9}$ & $3 \cdot 43057657$ \\
%% % param below does not exist by size of contracted mult group
%% $(1162359885,59052,3)$ & $3^{10}$ & $3 \cdot 5 \cdot 59 \cdot 67 \cdot 19603$ \\
%% % param below does not exist by size of contracted mult group
%% $(10460648451,177150,3)$ & $3^{11}$ & $3 \cdot 11 \cdot 316989347$ \\
%% \hline
%% \end{tabular}
%% \caption{Difficult Triplane Cases}
%% \label{tab:triplane}
%% \end{center}
%% \end{table}

The author has set up the La Jolla Difference Set Repository \cite{ljdsr}, an
online database containing existence results for parameters up to
$v=10^6$, as well as a large number of known difference sets.
There are $1.44$ million parameters that pass basic counting and the
BRC theorem, of which about $180{,}000$ were open.
Applying Theorems~\ref{thm:zph} and \ref{thm:gh}
resolved over $50{,}000$ of them.

\section*{Acknowledgement}

We thank the anonymous referee for suggestions that
led to Theorem~\ref{thm:gh}.

\bibliographystyle{plain}

\begin{thebibliography}{10}

\bibitem{arasu2}
K.~T. Arasu.
\newblock Singer groups of biplanes of order 25.
\newblock {\em Arch. Math.}, 53:622--624, 1989.

\bibitem{adjp}
K.T. Arasu, J.~Davis, D.~Jungnickel, and A.~Pott.
\newblock A note on intersection numbers of difference sets.
\newblock {\em Europ. J. Comb.}, 11:95--98, 1990.

\bibitem{bg}
L.~D. Baumert and D.~M. Gordon.
\newblock On the existence of cyclic difference sets with small parameters.
\newblock In Van~Der Poorten and Stein, editors, {\em Conference in Number
  Theory in Honour of Professor H.C. Williams}, pages 61--68, 2004.

\bibitem{bjl}
T.~Beth, D.~Jungnickel, and H.~Lenz.
\newblock {\em Design Theory}, volume~1 of {\em Encyclopedia of Mathematics and
  its Applications}.
\newblock Cambridge University Press, 2 edition, 1999.

\bibitem{scohen}
Stephen~D. Cohen.
\newblock Generators in cyclic difference sets.
\newblock {\em JCT A}, 51:227--236, 1989.

\bibitem{gordon}
D.~M. Gordon.
\newblock The prime power conjecture is true for $n<2{,}000{,}000$.
\newblock {\em Electronic J. Combinatorics}, 1, 1994.
\newblock R6.

\bibitem{ljdsr}
D.~M. Gordon.
\newblock La {J}olla {D}ifference {S}et {R}epository.
\newblock \url{https://www.dmgordon.org/diffsets}, 2020.

\bibitem{gs}
D.~M. Gordon and B.~Schmidt.
\newblock On the multiplier conjecture.
\newblock {\em Designs, Codes and Crypt.}, pages 221--236, 2016.

\bibitem{hughes}
D.~Hughes.
\newblock Biplanes and semi-biplanes.
\newblock In D.~A. Holton and Jennifer Seberry, editors, {\em Combinatorial
  Mathematics}, pages 55--58. Springer Berlin Heidelberg, 1978.

\bibitem{lander}
E.~S. Lander.
\newblock {\em Symmetric Designs: An Algebraic Approach}, volume~74 of {\em LMS
  Lecture Note Series}.
\newblock Cambridge, 1983.

\bibitem{2003.04929}
Sarah Peluse.
\newblock An asymptotic version of the prime power conjecture for perfect
  difference sets, 2020.

\bibitem{xc95}
Q.~Xiang and Y.Q. Chen.
\newblock On the size of the multiplier groups of cyclic difference sets.
\newblock {\em JCT A}, 69:168--169, 1995.

\end{thebibliography}

\end{document}